\pgfplotsset{compat=newest}
\def\*#1{\mathbf{#1}}
\def\^#1{\widehat{#1}}
\def\-#1{\overline{#1}}
\def\~#1{\widetilde{#1}}
\def\>#1{\overrightarrow{#1}}
\DeclareMathOperator{\blkdiag}{blkdiag}
\newcommand{\A}{\mathbf{A}}
\newcommand{\B}{\mathbf{B}}
\renewcommand{\c}{\mathbf{c}}
\newcommand{\U}{\mathbf{U}}
\newcommand{\V}{\mathbf{V}}
\newcommand{\W}{\mathbf{W}}
\newcommand{\0}{\mathbf{0}}
\newcommand{\one}{\text{\usefont{U}{bbm}{m}{n}1}} 
\newcommand{\comp}[1]{^{\{#1\}}}
\newcommand{\tcomp}[1]{^{[#1]}}
\newcommand{\super}[2]{^{\{#1\}#2}}
\newcommand{\bigo}[1]{\mathcal{O}{\left(#1\right)}}
\newcommand{\method}[1]{ADI-DIMSIM#1}
\newtheorem{theorem}{Theorem}
\newtheorem{remark}{Remark}
\newtheorem{definition}{Definition}
\newenvironment{butchertableau}[2][1.4]
{\def\arraystretch{#1}\array{#2}}
{\endarray}
\journal{Journal of Computational and Applied Mathematics }
\begin{document}

	\begin{frontmatter}
	
		\cslyear{19}
		\cslreportnumber{1}
		
		\csltitle{Alternating Directions Implicit\\ Integration in a General Linear Method Framework}
		
		\cslauthor{Arash Sarshar, Steven Roberts, and Adrian Sandu}
		\cslciteas{\texttt{Arash Sarshar, Steven Roberts, Adrian Sandu,
				Alternating directions implicit integration in a general linear method framework,
				Journal of Computational and Applied Mathematics,
				2019,
				112619,
				\url{https://doi.org/10.1016/j.cam.2019.112619.}}}
		\csltitlepage

		\title{Alternating Directions Implicit Integration \\ in a General Linear Method Framework}
			
		\author[cs]{Arash Sarshar\corref{cor1}}
		\ead{sarshar@vt.edu}
	
		\author[cs]{Steven Roberts}
		\ead{steven94@vt.edu}
		
		\author[cs]{Adrian Sandu}
		\ead{sandu@cs.vt.edu}

		\cortext[cor1]{Corresponding author}
		\address[cs]{Computational Science Laboratory \\ Department of Computer Science \\ Virginia Tech}

\begin{abstract}
Alternating Directions Implicit (ADI) integration is an operator splitting approach to solve parabolic and elliptic partial differential equations in multiple dimensions based on solving sequentially a set of related one-dimensional equations. Classical ADI methods have order at most two, due to the splitting errors. Moreover, when the time discretization of stiff one-dimensional problems is based on Runge-Kutta schemes, additional order reduction may occur. This work proposes a new ADI approach based on the partitioned General Linear Methods framework. This approach allows the construction of high order ADI methods. Due to their high stage order, the proposed methods can alleviate the order reduction phenomenon seen with other schemes.  Numerical experiments are shown to provide further insight into the accuracy, stability, and applicability of these new methods. 
\end{abstract}
		
		\begin{keyword}
			Initial value problems, time integration, IMEX methods, alternating directions
			AMS 65L05 \sep AMS  65L07 
		\end{keyword}
		
	\end{frontmatter}

\section{Introduction}

We are concerned with solving  the initial value problem: 
\begin{equation}
y'(t) = f(y) = \sum_{\sigma=1}^{N}f\comp{\sigma}(y),\quad y(t_0)=y_0, 
\label{eq:ode}
\end{equation}
where the right hand side function $f: \mathbb{R}^d \rightarrow \mathbb{R}^d$ is additively split into $N$ partitions. Systems such as \cref{eq:ode} emerge from method of lines semi-discretization of PDEs when all spatial derivatives are approximated by their discretization. In many cases, the right hand side function includes discrete self-adjoint operators performing spatial derivatives in different directions. The sparsity structure of these operators is similar, for example, in the case when a fixed-stencil finite difference method is used to resolve spatial derivatives. Implicit time-stepping methods are preferred to propagate stiff differential equations in time, but they require working with large Jacobian matrices. Implicit-Explicit (IMEX) methods allow us to integrate non-stiff parts of the system more efficiently, however, more can be achieved by designing specialized time-stepping methods for certain classes of problems. Depending on the choice of discretization, we can use the tensor product structure of derivative operators to only work with one-dimensional Jacobian matrices much smaller than the full Jacobian, applying directional derivatives in different directions in turn.

Alternating Directions Implicit (ADI) schemes for parabolic problems were first introduced in the works of Douglas \cite{Douglas_1955_ADI}, Douglas and Rachford \cite{Douglas_1956_ADI}, and Peaceman and Rachford \cite{Peaceman_1955_ADI}. Closely related to this field is the body of work on operator splitting schemes \cite{Strang_1968_splitting,Yoshida_1990_splitting,Yanenko_1971_book} and Approximate Matrix Factorizations (AMF) applied to Rosenbrock-W \cite{gonzalez2014rosenbrock,gonzalez2015amf,GONZALEZPINTO2018} and LIRK methods \cite{Sandu_2015_AMF-RK}. Another important development is the Fractional Step Runge--Kutta framework \cite{,Bujanda_2001_FSRK,Bujanda_2003_FSRK} investigating the link between directional methods and IMEX schemes. 

Early analysis of convergence of stiff ODEs can be traced back to Prothero-Robinson \cite{Prothero_1974_PR}. Ostermann \textit{et} al. formally show the fractional order phenomenon is related to changes in the behavior of local truncation error in stiff systems \cite{ostermann1992runge}. Methods of high stage order are known to alleviate this drawback \cite{BRAS201794,PORTERO2004409}. The General Linear Method (GLM) framework \cite{Jackiewicz_2009_book,Butcher_2001_GLM-stiff,Butcher_2003_construction} encompasses many of these methods and facilitates creation of new ones for novel applications. The theory of partitioned GLMs was formalized in \cite{Sandu_2014_IMEX-GLM} and different families of methods based on this structure have been reported in \cite{Sandu_2012_ICCS-IMEX,Sandu_2015_Stable_IMEX-GLM,Sandu_2016_highOrderIMEX-GLM,Sandu_2015_IMEX-TSRK}. More recent high order IMEX-GLMs found in the literature \cite{Izzo2019,SCHNEIDER2018121,schneider2019super} are based on Diagonally Implicit Multistage Integration Methods (DIMSIMs), Two-Step Runge--Kutta methods, and Peer methods providing various accuracy and stability enhancements. 

The goal of this paper is to extend the capabilities of ADI schemes to high order GLMs, creating methods resilient to order reduction while leveraging the efficiency of alternating implicit integration. 
The paper is organized as follows: We start by reviewing the partitioned GLM framework in \cref{sec:review-pglm}, introduce the structure of ADI-GLMs in \cref{sec:ADI-GLM-formulation}, study their order conditions in \cref{sec:ADI-GLM-OC}, and investigate their stability in \cref{sec:stability}. We comment on design principles and implementation aspects in \cref{sec:design} followed by numerical experiments in \cref{sec:experiments} and the concluding remarks in \cref{sec:conclusions}. \ref{app:methods} includes the coefficients of the new methods, and \ref{app:stability-plots} presents stability plots.

\section{ Traditional and partitioned General Linear Methods }
\label{sec:review-pglm}
A traditional GLM with $s$ internal and $r$ external stages represented by Butcher tableau:
\begin{align}
	\begin{array}{c|c|c}
		\c & \A & \U \\ \hline
   & \B & \V
	\end{array},
\end{align} 
advances the numerical solution to \cref{eq:ode} with timestep $h$ according to:
\begin{subequations}
	\begin{align}
			Y_i  &= h\sum_{j=1}^{s}a_{i,j} f(Y_j) + \sum_{j=1}^{r} u_{i,j}\,\xi_j^{[n-1]}, \quad
		i  = 1, \ldots,s, \\
		\xi_i^{[n]} &= h\sum_{j=1}^{s} b_{i,j} f(Y_j) + \sum_{j=1}^{r} v_{i,j} \xi_j^{[n-1]}, \quad
		i = 1,\ldots,r,
	\end{align}
\end{subequations}
where the matrix notation of coefficients is used:
\begin{align}
	\begin{aligned}
	& \A := [a_{i,j}] \in \mathbb{R}^{s\times s}, \quad \U := [u_{i,j}]\in \mathbb{R}^{s\times r}, \quad \B := [b_{i,j}]\in \mathbb{R} ^{r\times s}, \\    & \V := [v_{i,j}]\in \mathbb{R} ^{r\times r}, \quad
\W:=[w_{i,j}] = [\*w_0 \; \cdots \; \*w_p] \in \mathbb{R}^{r \times (p+1)},
	\end{aligned} 
\end{align}
where matrix $\W$ determines the relation between external stages and derivatives of the exact solution such that for a method of order $p$:
\begin{align*}
	\xi_i \tcomp{n} = \sum_{k=0}^{p} w_{i,k} h^k y^{(k)}(t_n) + \order{h^{p+1}}. 
\end{align*}
GLM framework is extensive and well-established. Readers interested in theoretical foundation of these methods are referred to the literature \cite{Jackiewicz_2009_book,Butcher_2001_GLM-stiff,Butcher_2003_construction}. 

IMEX-GLMs are extensions of traditional GLMs that treat partitions of the right hand side with different methods while keeping a single set of internal and external stages. One step of an IMEX-GLM formally reads as:
\begin{subequations}
	\label{eq:IMEX-GLM}
	\begin{align}
		Y_i  &= h\sum_{\sigma=1}^{N}\sum_{j=1}^{s}a\comp{\sigma}_{i,j} f\comp{\sigma}(Y_j) + \sum_{j=1}^{r} u_{i,j}\,\xi_j^{[n-1]}, \quad
		i  = 1, \ldots,s, \label{eq:IMEG-GLM-int-stage}\\
		\xi_i^{[n]} &= h \sum_{\sigma=1}^{N}\sum_{j=1}^{s} b_{i,j}\comp{\sigma} f\comp{\sigma}(Y_j) + \sum_{j=1}^{r} v_{i,j} \xi_j^{[n-1]}, \quad
		i = 1,\ldots,r. \label{eq:IMEG-GLM-ext-stage}
	\end{align}
\end{subequations}

\section{Formulation of ADI-GLMs}
\label{sec:ADI-GLM-formulation}

We rely on the theory of IMEX-GLMs as reported in  \cite{Sandu_2014_IMEX-GLM,Sandu_2015_Stable_IMEX-GLM,Sandu_2016_highOrderIMEX-GLM} to design partitioned GLMs suited for ADI integration. The goal is to construct GLMs that apply implicit integration to individual partitions of the right hand side function in \cref{eq:ode}, while using an explicit coupling to the other components. We seek to achieve high stage order while benefiting from the low computational cost of directional implicit methods. 

\begin{definition}[ADI-GLM schemes]
One step of an $N$-way partitioned ADI-GLM  applied to \cref{eq:ode} is defined as: 
\begin{subequations}
\label{eq:ADI-GLM-step}
\begin{align}
	Y_i\comp{\mu}  &= h\sum_{\sigma=1}^{N}\sum_{j=1}^{s}a\comp{\mu,\sigma}_{i,j} f\comp{\sigma}(Y_j\comp{\sigma}) + \sum_{\sigma=1}^{N}\sum_{j=1}^{r} u_{i,j}\comp{\mu, \sigma}\xi_j\super{\sigma}{[n-1]}, \label{ADI-GLM-step-int-stage} \\
	i & = 1, \ldots,s, \qquad \mu = 1,\ldots,N, \notag\\
	\xi_i\super{\mu}{[n]} &= h \sum_{\sigma=1}^{N}\sum_{j=1}^{s} b_{i,j}\comp{\mu,\sigma} f\comp{\sigma}(Y_j\comp{\sigma}) +  \sum_{\sigma=1}^{N}\sum_{j=1}^{r} v_{i,j}\comp{\mu,\sigma} \xi_j\super{\sigma}{[n-1]}, \label{ADI-GLM-step-ext-stage} \\
	i &= 1,\ldots,r, \qquad \mu = 1,\ldots,N. \notag 
\end{align}
\end{subequations}
\end{definition}
Here, we are interested in applying different combinations of explicit and diagonally implicit methods to the right hand side partitions and storing the resulting internal and external stages separately.

If the method is order $p$, the external stages are related to derivatives of $y$ by:
\begin{align}
	\xi_i\super{\mu}{[n]} &=  w_{i,0}\comp{\mu} y(t_n) + \sum_{\sigma=1}^{N} \sum_{k=1}^{p} w_{i,k}\comp{\mu,\sigma} h^k (f\comp{\sigma})^{(k-1)}\left(y\left(t_n\right)\right) + \bigo{h^{p+1}}, \label{eq:ext-stages-nordsik} \\
\W\comp{\mu,\sigma}:& = [\*w_0\comp{\mu} \; \cdots \; \*w_p\comp{\mu,\sigma} ] \in \mathbb{R}^{r \times (p+1)}.
\end{align} 
The method is stage order $q$ if internal stages are approximations of the exact solution at  abscissa points $\c\comp{\mu}$:
\begin{equation}
	Y_i\comp{\mu} = y(t_{n-1} + \c_i\comp{\mu}h) + \bigo{h^{q+1}}.
	\label{eq:int-stages-order}
\end{equation}

\section{Construction of ADI-GLMs}
\label{sec:ADI-GLM-OC}
We start by considering a pair of explicit and implicit GLMs with the same number of external and internal stages:
\begin{equation}
	\label{eq:GLM-pair}
	\bgroup
	\def\arraystretch{1}
	\begin{butchertableau}{c|c|c}
	\c\comp{E}  & \A\comp{E}  & \U\comp{E}  \\ \hline 
	   		    & \B\comp{E}   & \V\comp{E}
	\end{butchertableau}\,,
	\egroup \quad
		\bgroup
	\def\arraystretch{1}
	\begin{butchertableau}{c|c|c}
	\c\comp{I}  & \A\comp{I}  & \U\comp{I}  \\ \hline 
	& \B\comp{I}   & \V\comp{I}
	\end{butchertableau}.
	\egroup
\end{equation}
We construct  ADI-GLMs using a collection of IMEX-GLMs each  performing implicit integration in a specific direction. A preconsistent IMEX-GLM has order $p$ and stage order $q \in \{p, p-1\}$ if and only if the following conditions hold: 
\begin{subequations}
	\begin{align}
		\frac{  \c\super{\sigma}{\times k} }{k!} - \frac{\A \c\super{\sigma}{\times (k-1)}}{(k-1)!} - \U\comp{\sigma} \*w\comp{\sigma}_k &= 0, \\
		 \quad k = \{1,\ldots,q\}, &\quad \sigma \in \{E,I\}, \notag \\
		\sum_{l=0}^{k} \frac{\*w\comp{\sigma}_{k-l}}{l!} - \frac{\B\comp{\sigma} \c\super{\sigma}{\times (k-1)}}{(k-1)!} - \V\comp{\sigma} \*w\comp{\sigma}_k &= 0,\\
		 \quad k = \{1,\ldots,p\}, & \quad \sigma \in \{E,I\}. \notag
	\end{align}
	\label{eq:GLM-OC}
\end{subequations}
The structure of the Butcher tableau for an ADI-GLM  depends on the number of partitions and number of stiff partitions that require implicit treatment. Here, we focus on three practical examples and more elaborate designs follow the same principles. The Butcher tableau for a 3-way partitioned ADI-GLM with alternating implicit stages in all partitions is:
\begin{equation}
	\begin{butchertableau}{c|c c c|c c c}
		\c & \A\comp{I} & \A\comp{E} & \A\comp{E} & \U & \0 & \0 \\
		\c & \A\comp{I} & \A\comp{I} & \A\comp{E} & \0 & \U  & \0 \\ 
		\c & \A\comp{I} & \A\comp{I} & \A\comp{I} & \0 & \0  & \U\\ \hline 
		& \B\comp{I} & \B\comp{E} & \B\comp{E} & \V & \0 & \0  \\ 
		& \B\comp{I} & \B\comp{I} & \B\comp{E} & \0 & \V & \0  \\ 
		& \B\comp{I} & \B\comp{I} & \B\comp{I} & \0 & \0 & \V   
	\end{butchertableau}\,.
	\label{eq:ADI-DIMSIM-3D}
\end{equation}

When only two partitions are stiff, the non-stiff partition is carried through explicitly:
\begin{equation}
\begin{butchertableau}{c|c c c|c c c}
\c & \A\comp{I} & \A\comp{E} & \A\comp{E} & \U & \0  & 0 \\
\c & \A\comp{I} & \A\comp{I} & \A\comp{E} & \0 & \U   & 0 \\ 
\c & \A\comp{I} & \A\comp{I} & \A\comp{E} & \0 & 0  & \U \\ \hline 
& \B\comp{I} & \B\comp{E} & \B\comp{E} &   \V & \0 & 0   \\ 
& \B\comp{I} & \B\comp{I} & \B\comp{E} & \0 & \V& 0  \\
 & \B\comp{I} & \B\comp{I} & \B\comp{E} & \0 & 0 & \V  \\   
\end{butchertableau}\,.
\label{eq:ADI-DIMSIM-2D-3Part}
\end{equation}
We notice immediately that $Y\comp{3}_i \equiv Y\comp{2}_i$, therefore one only computes two types of stage vectors, and the second is used as an argument for the explicit integration of the third, non-stiff component.

In a similar fashion, a 2-way partitioned ADI-GLM is described by:
\begin{equation}
	\begin{butchertableau}{c|c c |c c }
	\c & \A\comp{I} & \A\comp{E} & \U & \0   \\
	\c & \A\comp{I} & \A\comp{I} & \0 & \U   \\  \hline 
	    & \B\comp{I} & \B\comp{E} &  \V & \0   \\ 
	    & \B\comp{I} & \B\comp{I}  &   \0 & \V   
	\end{butchertableau}\,.
	\label{eq:ADI-DIMSIM-2D}
\end{equation}
\begin{remark}
	Comparing \cref{eq:ADI-DIMSIM-3D,eq:ADI-DIMSIM-2D-3Part,eq:ADI-DIMSIM-2D} with \cref{eq:ADI-GLM-step}, notice that we have chosen:
	\begin{subequations}
		\begin{align}
			\c\comp{E} &= \c\comp{I} = \c, \\
			\U\comp{E} &= \U\comp{I} = \U, \\
			\V\comp{E} &= \V\comp{I} = \V.
		\end{align}
		\label{eq:U-V-consistency}
	\end{subequations}
	 This selection is practically useful in creating IMEX-GLMs with unified internal stages. In the context of ADI-GLMs this choice allows us to keep the number of internal and external stages as low as the number of stiff partitions.
	\end{remark}
	
\begin{remark}
 We have also decoupled  computations involving the external stages: 
	\begin{align}
		\label{eq:U-V-diagonality}
	   \U\comp{\sigma,\mu} = 
	   		\begin{cases}
			    \0 ~~  & \sigma \neq \mu \\
			    \U ~~ & \sigma = \mu 
			\end{cases}, \qquad 
   		\V\comp{\sigma,\mu} = \begin{cases}
	     \0 ~~  & \sigma \neq \mu \\
	     \V ~~ & \sigma = \mu 
	     \end{cases}. \qquad 
	\end{align}
\end{remark}

\begin{theorem}
	\label{thm:ADI-GLM-oc-thrm}
	The ADI-GLMs \cref{eq:ADI-GLM-step} subject to \cref{eq:U-V-consistency,eq:U-V-diagonality} is stage order $q$ and order $p$, hereafter denoted by order $(q,p)$, if and only if individual methods \eqref{eq:GLM-pair} are order $(q,p)$.
\end{theorem}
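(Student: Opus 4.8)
The plan is to treat the ADI-GLM \cref{eq:ADI-GLM-step} as a single partitioned GLM whose block coefficient matrices $\A\comp{\mu,\lambda}$, $\U\comp{\mu,\sigma}$, $\B\comp{\mu,\lambda}$, $\V\comp{\mu,\sigma}$ are read off from the tableaux \cref{eq:ADI-DIMSIM-3D,eq:ADI-DIMSIM-2D-3Part,eq:ADI-DIMSIM-2D}, and to show that its order conditions collapse, block by block, onto the two instances of \cref{eq:GLM-OC} for the pair \eqref{eq:GLM-pair}. First I would derive the order conditions of the full method: setting $t_{n-1}=0$, I substitute the stage-order ansatz \cref{eq:int-stages-order} for every internal stage and the Nordsieck representation \cref{eq:ext-stages-nordsik} of the incoming external stages into \cref{ADI-GLM-step-int-stage,ADI-GLM-step-ext-stage}, Taylor expand each $f\comp{\sigma}(Y_j\comp{\sigma})$ about $t_{n-1}$, and match coefficients of equal powers of $h$ and equal elementary differentials $(f\comp{\lambda})^{(k-1)}(y)$, treated as linearly independent. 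Because $f\comp{\sigma}$ is always evaluated at its own stage $Y\comp{\sigma}$, the term $h\sum_j a\comp{\mu,\sigma}_{i,j} f\comp{\sigma}(Y_j\comp{\sigma})$ feeds only the $(f\comp{\sigma})^{(k-1)}$ channel, whereas $y^{(k)}=\sum_\lambda (f\comp{\lambda})^{(k-1)}$ spreads the exact-solution terms across all channels.

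Imposing the decoupling \cref{eq:U-V-diagonality}, which turns $\sum_\sigma \U\comp{\mu,\sigma}\*w_k\comp{\sigma,\lambda}$ into $\U\,\*w_k\comp{\mu,\lambda}$ and likewise $\sum_\sigma \V\comp{\mu,\sigma}\*w_k\comp{\sigma,\lambda}$ into $\V\,\*w_k\comp{\mu,\lambda}$, together with the shared data \cref{eq:U-V-consistency}, the bookkeeping leaves one internal condition per block $(\mu,\lambda)$ and order $k\le q$, namely $\frac{\c^{\times k}}{k!}-\frac{\A\comp{\mu,\lambda}\c^{\times(k-1)}}{(k-1)!}-\U\,\*w_k\comp{\mu,\lambda}=0$, and one external condition per block and order $k\le p$, namely $\sum_{l=0}^{k}\frac{\*w_{k-l}\comp{\mu,\lambda}}{l!}-\frac{\B\comp{\mu,\lambda}\c^{\times(k-1)}}{(k-1)!}-\V\,\*w_k\comp{\mu,\lambda}=0$, with $\*w_0\comp{\mu,\lambda}$ the common preconsistency vector of \cref{eq:ext-stages-nordsik}. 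These coincide exactly with \cref{eq:GLM-OC} once $\A\comp{\mu,\lambda}$, $\B\comp{\mu,\lambda}$, $\*w_k\comp{\mu,\lambda}$ play the roles of $\A\comp{\sigma}$, $\B\comp{\sigma}$, $\*w_k\comp{\sigma}$.

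The observation that closes the argument is that every block is one of the two given methods: inspecting \cref{eq:ADI-DIMSIM-3D,eq:ADI-DIMSIM-2D-3Part,eq:ADI-DIMSIM-2D} shows $\A\comp{\mu,\lambda}\in\{\A\comp{E},\A\comp{I}\}$ and $\B\comp{\mu,\lambda}\in\{\B\comp{E},\B\comp{I}\}$ with the two of the same type in each block, the implicit value being taken on the lower-triangular blocks ($\lambda\le\mu$) and the explicit value on the strictly upper ones in the fully stiff tableaux. Hence each block condition is literally the implicit or the explicit instance of \cref{eq:GLM-OC}. For the ``if'' direction I take weights $\*w_k\comp{E},\*w_k\comp{I}$ witnessing order $(q,p)$ of the pair and define the block weights by type, $\*w_k\comp{\mu,\lambda}:=\*w_k\comp{I}$ on the implicit blocks and $\*w_k\comp{\mu,\lambda}:=\*w_k\comp{E}$ on the explicit ones; every block condition then holds, so the method is order $(q,p)$. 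For ``only if'', since at least one block of each type occurs for $N\ge 2$, I read the implicit conditions off an implicit block and the explicit conditions off an explicit block, setting $\*w_k\comp{\sigma}:=\*w_k\comp{\mu,\lambda}$ from those representatives to recover \cref{eq:GLM-OC} for both $\sigma\in\{E,I\}$.

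I expect the Taylor bookkeeping behind the two block conditions to be the main obstacle, since it requires tracking two independent indices at once---the partition $\lambda$ labelling the elementary differential and the Nordsieck level $k$---handling the one-step shift $t_{n-1}\to t_n$ in the external update that produces the convolution $\sum_{l}\*w_{k-l}\comp{\mu,\lambda}/l!$, and checking that the zeroth component $\*w_0$ is common to both methods so that the $l=k$ term agrees across blocks of different type. Once the block conditions are in the stated form the collapse onto \cref{eq:GLM-OC} and the equivalence are immediate, the only global hypothesis being that both method types appear as blocks, which the ADI tableau structure guarantees whenever $N\ge 2$.
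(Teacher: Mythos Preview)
Your argument is correct but takes a different route from the paper. The paper's proof is shorter and more conceptual: it exploits the shared abscissa \cref{eq:U-V-consistency} to note that all internal stages agree to order $h^{q+1}$, then substitutes $Y_j\comp{\mu}$ for $Y_j\comp{\sigma}$ in \cref{ADI-GLM-step-int-stage} to collapse the ADI-GLM, row by row, into a family of IMEX-GLMs of the form \cref{eq:ADI-to-IMEX}, and finally invokes the existing IMEX-GLM order-condition theorem \cite{Sandu_2014_IMEX-GLM,zhang2016high} as a black box. You instead rederive the order conditions from scratch by Taylor expansion, obtain one instance of \cref{eq:GLM-OC} per block $(\mu,\lambda)$, and then observe that each block coincides with either the explicit or the implicit base method. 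The paper's reduction is quicker and makes the dependence on high stage order explicit (the stage-swapping step is where $q$ enters), while your approach is self-contained and does not rely on the cited IMEX-GLM result; it also makes transparent exactly which weights $\*w_k\comp{\mu,\lambda}$ witness the order, something the paper leaves implicit. The Taylor bookkeeping you flag as the main obstacle is indeed routine once one notes that $f\comp{\sigma}$ only ever sees $Y\comp{\sigma}$, so each partition channel decouples cleanly.
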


\begin{proof}
	We first assume that the ADI-GLM is order $(q,p)$ such that \cref{eq:ext-stages-nordsik,eq:int-stages-order} hold. Since all internal stages $Y\comp{\sigma}_i$ share the same abscissa, from \cref{eq:int-stages-order} we have:
	\begin{equation}
		Y\comp{\sigma}_i = Y\comp{\mu}_i + \bigo{h^{q+1}}, \qquad \sigma,\mu \in \{1,\ldots, N \}. 
		\label{eq:stage-order-equivalency}
	\end{equation} 
	Therefore, we can replace  $Y\comp{\sigma}_j$ with $Y\comp{\mu}_j$ in \cref{ADI-GLM-step-int-stage} without changing the order. The resulting method is an IMEX-GLM with 
	\begin{equation}
	\begin{butchertableau}{c|c c c|c  }
	\c & \A\comp{\mu,1} & \cdots        & \A\comp{\mu,N} & \U  \\  \hline 
	    & \B\comp{\mu,1}  & \cdots       & \B\comp{\mu,N}  & \V   
	\end{butchertableau}, \qquad \mu \in \{1, \ldots, N\}.
	\label{eq:ADI-to-IMEX}
	\end{equation}
	From IMEX-GLM order conditions \cite{Sandu_2014_IMEX-GLM,zhang2016high}  method \eqref{eq:ADI-to-IMEX} is order $(q,p)$ if and only if individual methods 
\begin{equation*}
		\begin{butchertableau}{c|c|c  }
		\c & \A\comp{\mu,\sigma} & \U  \\  \hline 
		   & \B\comp{\mu,\sigma}  & \V   
		\end{butchertableau},   \qquad \mu \in \{1, \ldots, N\}, \qquad \sigma \in \{1, \ldots, N\}.
\end{equation*}
 are order $(q,p)$. This means that the methods in \cref{eq:GLM-pair} have to be order $(q,p)$.
	
The \textit{if} part of the theorem can be proven along the same line of reasoning. Assuming individual methods  \eqref{eq:GLM-pair} are order $(q,p)$ the IMEX-GLM \eqref{eq:ADI-to-IMEX} is order $(q,p)$. Internal stage values in \cref{eq:IMEG-GLM-int-stage} can be replaced by an approximation of the same order as in \cref{eq:stage-order-equivalency} to create the internal stages for ADI-GLM. Since the order of internal stages has not changed, external stages also remain order $p$. This concludes the proof.
\end{proof}
\begin{remark}
	A corollary to \cref{thm:ADI-GLM-oc-thrm} is that in the case of ADI-GLM \eqref{eq:ADI-DIMSIM-2D-3Part}, we can forgo computing $\mleft(  Y\comp{3}_i, \xi\super{3}{[n]} \mright)$ stages without losing accuracy. Furthermore, this choice will not affect the stability since the stiff partitions are still treated implicitly and the integration of the non-stiff partition already appears in stage computations.
\end{remark}

\section{Stability of ADI-GLMs}
\label{sec:stability}
Applying the ADI-GLM \eqref{eq:ADI-DIMSIM-3D} to the linear scalar test equation:
\begin{equation}
	u' = \lambda_x u + \lambda_y u + \lambda_z u,
	\label{eq:dalquitst-test}
\end{equation}
and using \cref{eq:ADI-GLM-step} leads to the following directional stages:
\begin{subequations}
	\begin{align}
	Y\comp{1} &= \eta_x \A\comp{I} Y\comp{1} + \eta_y \A\comp{E} Y\comp{2} + \eta_z \A\comp{E} Y\comp{3} + \U \xi\super{1}{[n-1]},\\
	Y\comp{2} &= \eta_x \A\comp{I} Y\comp{1} + \eta_y \A\comp{I} Y\comp{2} + \eta_z \A\comp{E} Y\comp{3} + \U \xi\super{2}{[n-1]},\\		
	Y\comp{3} &= \eta_x \A\comp{I} Y\comp{1} + \eta_y \A\comp{I} Y\comp{2} + \eta_z \A\comp{I} Y\comp{3} + \U \xi\super{3}{[n-1]},\\
	\xi\super{1}{[n]} &= \eta_x \B\comp{I} Y\comp{1} + \eta_y \B\comp{E} Y\comp{2} + \eta_z \B\comp{E} Y\comp{3} + \V \xi\super{1}{[n-1]},\\
	\xi\super{2}{[n]} &= \eta_x \B\comp{I} Y\comp{1} + \eta_y \B\comp{E} Y\comp{2} + \eta_z \B\comp{E} Y\comp{3} + \V \xi\super{2}{[n-1]},\\
	\xi\super{3}{[n]} &= \eta_x \B\comp{I} Y\comp{1} + \eta_y \B\comp{I} Y\comp{2} + \eta_z \B\comp{I} Y\comp{3} + \V \xi\super{3}{[n-1]},
	\end{align}
\end{subequations}
where  $\eta_x = h\lambda_x, \eta_y = h \lambda_y, \eta_z = h \lambda_z$.
Defining auxiliary notations $\*Z = \blkdiag{\mleft( \eta_x \*I_{s \times s}, \eta_y \*I_{s \times s}, \eta_z \*I_{s \times s} \mright)}$ and $\xi^{[n]} = \mleft(\xi\super{1}{[n]}, \xi\super{2}{[n]}, \xi\super{3}{[n]} \mright)^T $,
the stability matrix is defined as:  
\begin{subequations}
	\begin{align}
	\xi^{[n]} &= \mathbf{M}( \eta_x, \eta_y, \eta_z )\,\xi^{[n-1]}, \\
	\mathbf{M}( \eta_x, \eta_y, \eta_z ) &=   \~{\*V} +  \~{\*B} \, \*Z \mleft( \mathbf{I}_{3s \times 3s} - \~{\*A} \*Z \mright )^{-1} \~{\*U}, 
	\end{align}
\end{subequations}
where:
\begin{subequations}
	\begin{align}
	\~{\*A} &= 
	\begin{bmatrix}
	\A\comp{I} & \A\comp{E} & \A\comp{E}\\
	\A\comp{I} & \A\comp{I} &  \A\comp{E} \\
	\A\comp{I} & \A\comp{I} &  \A\comp{I} 
	\end{bmatrix},\qquad 
	\~{\*U} = \*I_{3 \times 3} \otimes \U,
	\\
	\~{\*B} &= 
	\begin{bmatrix}
	\B\comp{I} &  \B\comp{E} & \B\comp{E} \\
	\B\comp{I} &  \B\comp{I} &  \B\comp{E} \\
	\B\comp{I} &  \B\comp{I} &  \B\comp{I} 
	\end{bmatrix} ,\qquad 
	\~{\*V} = \*I_{3 \times 3} \otimes \V .
	\end{align}
\end{subequations}
When the eigenvalues of the system \eqref{eq:dalquitst-test} are equal in all directions  such that $\eta_x = \eta_y = \eta_z = \eta$ the stability matrix becomes:
\begin{equation}
	\^{\*M}(\eta) = \*M (\eta, \eta, \eta) = \~{\*V} + {\eta}\~{\*B} \mleft(  \*I_{3s\times 3s} - {\eta} \~{\*A} \mright)^{-1} \~{\*U}.
	\label{eq:stab-equal-eigs}
\end{equation}
\Cref{eq:stab-equal-eigs} provides practical means for assessment and optimization of stability of ADI-GLMs. 
\begin{remark}
	The stability regions for individual explicit and implicit methods are defined as:
	\begin{subequations}
	\begin{align}
			\mathcal{S}\comp{\sigma} &= \mleft\{ \eta \in \mathbb{C} : \; \*M\comp{\sigma}(\eta) \text{ power bounded} \mright\},\\
		    \*M\comp{\sigma}(\eta) &= \V\comp{\sigma} + \eta \B\comp{\sigma} \mleft(  \*I_{s\times s} - \eta \A\comp{\sigma}\mright)^{-1}\U\comp{\sigma} ,\quad \sigma \in \{E,I \}.
	\end{align}
	\end{subequations}
	The stability region of a 3-way partition method is defined as: 
	\begin{align}
		S &= \mleft \{ \eta_x, \eta_y, \eta_z \in \mathbb{C}  : \; \*M(\eta_x, \eta_y, \eta_z) \text{ power bounded} \mright \}.
	\end{align}
\end{remark}
\begin{remark}
	To investigate the stability of ADI-GLMs we define real and complex stability regions as:
	\begin{subequations}
		\begin{align}
			\mathcal{S}_{\mathrm{Real}} & = \mleft \{ \eta_x,\eta_y  \in \mathbb{R} : \; \*M \mleft(\eta_x, \eta_y, \max \mleft(\eta_x,\eta_y  \mright) \mright)  \text{ power bounded}  \mright \},\\
			\mathcal{S}_{\mathrm{Cplx}} & = \mleft\{ \eta \in \mathbb{C}  : \; \^{\*M}(\eta) \text{ power bounded} \mright\}.
		\end{align}
	\end{subequations}
\end{remark}

\begin{remark}[Stability as all partitions become infinitely stiff]
Consider the stability matrix \cref{eq:stab-equal-eigs} when the eigenvalues in each direction simultaneously approach $-\infty$:
\begin{equation} \label{eq:mhat_limit}
	\lim_{\eta \to - \infty} \^{\*M}(\eta) = \begin{bmatrix}
		\*V\comp{I} - \left( \*B\comp{I} - \*B\comp{E} \right) \left( \*A\comp{I} - \*A\comp{E} \right)^{-1} \*U & \boldsymbol{*} \\
		\0 & \*M\comp{I}(-\infty)
	\end{bmatrix}.
\end{equation}
Due to the block triangular structure of this matrix, the eigenvalues of \cref{eq:mhat_limit} are the eigenvalues of the diagonal blocks and the entries in the upper right block can be ignored.

Consider the case $p = q = r = s$.  We will further assume $\*w\comp{I}_0 = \*w\comp{E}_0$, which comes at no loss of generality since we can always pick an equivalent formulation of the base methods where this holds. Using the difference of the order conditions of the base methods, we have that
\begin{subequations}
	\begin{align}
		\left( \A\comp{I} - \A\comp{E} \right) \*C + \U \left( \W_{:, 1:p}\comp{I} - \W_{:, 1:p}\comp{E} \right) &= \0, \\
		\left( \W_{:, 1:p}\comp{I} - \W_{:, 1:p}\comp{E} \right) \boldsymbol{\mu} - \left( \B\comp{I} - \B\comp{E} \right) \*C - \V \left( \W_{:, 1:p}\comp{I} - \W_{:, 1:p}\comp{E} \right) &= \0,
	\end{align}
\end{subequations}
where
\begin{equation}
	\mu_{i,j} = \begin{cases}
		0 & i > j \\
		\frac{1}{(j - i)!} & i \le j
	\end{cases},
	\qquad
	\*C = \begin{bmatrix}
		\one_s & \c & \frac{\c^2}{2} & \cdots & \frac{\c^{p-1}}{(p-1)!}
	\end{bmatrix}.
\end{equation}

Now we have that
\begin{equation}
	\begin{split}
		& \quad \left( \*V\comp{I} - \left( \*B\comp{I} - \*B\comp{E} \right) \left( \*A\comp{I} - \*A\comp{E} \right)^{-1} \*U \right) \left( \*W_{:, 1:p}\comp{I} - \*W_{:, 1:p}\comp{E} \right) \\
		&= \*V\comp{I} \left( \*W_{:, 1:p}\comp{I} - \*W_{:, 1:p}\comp{E} \right) + \left( \*B\comp{I} - \*B\comp{E} \right) \*C \\
		&= \left( \*W_{:, 1:p}\comp{I} - \*W_{:, 1:p}\comp{E} \right)  \boldsymbol{\mu}.
	\end{split}
\end{equation}
Thus, the upper left block of \cref{eq:mhat_limit} is similar to $\boldsymbol{\mu}$ provided $\W_{:, 1:p}\comp{I} - \W_{:, 1:p}\comp{E}$ is non-singular.  In this case, \cref{eq:mhat_limit} is not power bounded because the $1$ eigenvalue of $\boldsymbol{\mu}$ is defective.  We note that this is not an issue when only a single eigenvalue becomes infinitely stiff.
\end{remark}

In \ref{app:stability-plots} we provide plots of different stability regions for ADI-GLMs. 
\section{Design and implementation of  ADI-GLMs}
\label{sec:design}
We have chosen the GLMs to be DIMSIMs \cite{Butcher_1993_diagonal} in order to reduce the number of free parameters in the design and simplify the order conditions. We require:
\begin{subequations}
	\begin{align}
	a_{i,i}\comp{I} &= \gamma, \quad a_{i,i}\comp{E} = 0, \quad a_{i,j}\comp{\sigma} =0, \quad \text{for} \quad j>i,  &\sigma \in \{E,I \}, \\
	\U\comp{\sigma} &= \*I_{s\times r},  &\sigma \in \{E,I \},\\
	\V\comp{\sigma} &= \one^T_r v, \quad v^T \one_r = 1, &\sigma \in \{E,I \}.
	\end{align}
\end{subequations}
\method{}s derived in this paper have $p=q=r=s$.
 The design process starts with choosing the abscissa vector $\c$. The remaining free parameters are coefficients of $\A\comp{E}$, $\A\comp{I}$, and $v$. For the new second and third order schemes, we picked existing, L-stable, type 2 DIMSIMs for $\A\comp{I}$ and $v$.  Then, we choose $\A\comp{E}$ by numerically optimizing the area of the $\mathcal{S}_{\mathrm{Cplx}}$ and $\mathcal{S}\comp{E}$ stability regions using Mathematica.  At fourth order, we performed the same optimization for $\A\comp{E}$, however, we were unable to achieve satisfactory stability when using an existing type 2 DIMSIM for the implicit base method.  Instead, we derived a new A$(\ang{83})$-stable DIMSIM for which the ADI-GLM stability was acceptable.

Once $\A\comp{E}$ and $\A\comp{I}$ and $v$ are determined, $\B\comp{E}$ and $\B\comp{I}$ are given using DIMSIM formulas \cite{jackiewicz2009general,butcher1993diagonally}. $\W\comp{I}$ and $\W\comp{E}$ are computed by solving \cref{eq:GLM-OC} and used in the starting procedure  to generate initial values of the external stages at the beginning of the time-stepping loop in \cref{eq:ADI-GLM-step}. The starting procedure consists of integrating the system  \cref{eq:ode} exactly over a short time-span $\left[0,(p-1) H \right]$ and using function values \begin{equation}
	f_k\comp{\sigma}:=f\comp{\sigma}\mleft( y\mleft( k H\mright) \mright), \quad  k = \{0, \ldots, p-1\}, \quad \sigma \in\{ 1,\ldots,N\},
\end{equation}
to approximate, via finite differences, the higher order derivatives needed in \cref{eq:ext-stages-nordsik}. Readers interested in further details about the starting procedure may consult \cite{califano2017starting,Sandu_2014_IMEX-GLM}. The ending procedure for GLMs produces the high order approximation to $y(t_f)$ at the final time using stage values. All ADI-GLMs designed in this paper have the property that $\c_{s} = 1$, therefore, the last computed internal stage may be used as the final value in the integration with no further calculation required:
	\begin{equation}
	y_{t_f} = Y_s\comp{N}  = h\sum_{\sigma=1}^{N}\sum_{j=1}^{s}a\comp{N ,\sigma}_{s,j} f\comp{\sigma}(Y_j\comp{\sigma}) + \sum_{j=1}^{r} u_{s,j}\comp{N , \sigma}\xi_j\super{\sigma}{[n-1]}.
	\end{equation}

\begin{remark}[The ADI character of the methods]
	The Butcher tableau for ADI-GLMs can be permuted to reflect the order of computation of stages in practice. In general, an ADI-GLM proceeds with computing internal stages: 
	\begin{equation}
	\mleft\{ Y\comp{1}_1, Y\comp{2}_1, \ldots, Y\comp{N}_1, \ldots, Y\comp{1}_s, \ldots, Y\comp{2}_s, \ldots, Y\comp{N}_s \mright\},
	\end{equation} 
	after which external stage updates are computed. Let us consider the application of the second order ADI-GLM \cref{eq:ADI-DIMSIM2-GARK-table} to \cref{eq:ADI-DIMSIM-2D}. We reorder the tableau according to the permutation list $\mathcal{P} = \{1,3,2,4\}$ to get the permuted tableau \cref{eq:ADI-DIMSIM2-GARK-table-permuted}. 
	\begin{subequations}
	\begin{align}
	\begin{butchertableau}{c|c|c}
	\c & \A & \U \\ \hline
	& \B & \V
	\end{butchertableau} & =
	\def\arraystretch{0.8}
	\begin{butchertableau}[1.3]{c|cccc|cccc}
	0 & \frac{5}{8} & 0 & 0 & 0 & 1 & 0 & 0 & 0 \\
	1 & \frac{1}{4} & \frac{5}{8} & \frac{1}{2} & 0 & 0 & 1 & 0 & 0 \\
	0 & \frac{5}{8} & 0 & \frac{5}{8} & 0 & 0 & 0 & 1 & 0 \\
	1 & \frac{1}{4} & \frac{5}{8} & \frac{1}{4} & \frac{5}{8} & 0 & 0 & 0 & 1 \\ \hline
	& \frac{1}{2} & -\frac{5}{32} & -\frac{3}{128} & \frac{5}{128} & -\frac{5}{16} & \frac{21}{16} & 0 & 0 \\
	& 0 & \frac{27}{32} & \frac{13}{128} & \frac{85}{128} & -\frac{5}{16} & \frac{21}{16} & 0 & 0 \\
	& -\frac{3}{128} & \frac{5}{128} & -\frac{3}{128} & \frac{5}{128} & 0 & 0 & -\frac{5}{16} & \frac{21}{16} \\
	& \frac{13}{128} & \frac{85}{128} & \frac{13}{128} & \frac{85}{128} & 0 & 0 & -\frac{5}{16} & \frac{21}{16} \\
	\end{butchertableau}, \label{eq:ADI-DIMSIM2-GARK-table} \\
	\begin{butchertableau}{c|c|c}
	\c & \A_{\mathcal{P},\mathcal{P}} & \U_{\mathcal{P},:} \\ \hline
	& \B_{:,\mathcal{P}} & \V
	\end{butchertableau} & =
	\def\arraystretch{0.8}
	\begin{butchertableau}[1.3]{c|cccc|cccc}
	0 & \frac{5}{8} & 0 & 0 & 0 & 1 & 0 & 0 & 0 \\
	0 & \frac{5}{8} & \frac{5}{8} & 0 & 0 & 0 & 0 & 1 & 0 \\
	1 & \frac{1}{4} & \frac{1}{2} & \frac{5}{8} & 0 & 0 & 1 & 0 & 0 \\
	1 & \frac{1}{4} & \frac{1}{4} & \frac{5}{8} & \frac{5}{8} & 0 & 0 & 0 & 1 \\ \hline
	& \frac{1}{2} & -\frac{3}{128} & -\frac{5}{32} & \frac{5}{128} & -\frac{5}{16} & \frac{21}{16} & 0 & 0 \\
	& 0 & \frac{13}{128} & \frac{27}{32} & \frac{85}{128} & -\frac{5}{16} & \frac{21}{16} & 0 & 0 \\
	& -\frac{3}{128} & -\frac{3}{128} & \frac{5}{128} & \frac{5}{128} & 0 & 0 & -\frac{5}{16} & \frac{21}{16} \\
	& \frac{13}{128} & \frac{13}{128} & \frac{85}{128} & \frac{85}{128} & 0 & 0 & -\frac{5}{16} & \frac{21}{16} \\
	\end{butchertableau}. \label{eq:ADI-DIMSIM2-GARK-table-permuted}
	\end{align}
	\end{subequations}	
	We observe how the lower triangular structure of $\A_{\mathcal{P},\mathcal{P}}$ defines successive implicit stages in different directions while using previously computed stage values explicitly. 
\end{remark}

\section{Numerical Experiments}
\label{sec:experiments}

In this section, we investigate numerically the accuracy and stability of ADI-DIMSIMs using 2D and 3D time-dependent parabolic PDEs. Up to this point, we have only considered autonomous problems, however, ADI-GLM extends to non-autonomous systems by evaluating the right hand side functions at the consistent times $t_{n-1} + \c h$.  For a 3D problem we use the equation: 

\begin{subequations}
\begin{align}
	\frac{\partial u}{\partial t} &= \frac{\partial^2 u}{\partial x^2} + \frac{\partial^2 u}{\partial y^2} + \frac{\partial^2 u}{\partial z^2}+ g(x,y,z,t),\\
	\begin{split}
	g(x,y,z,t) &= e^t (1-x) x (1-y) y (1-z) z \\
	&\qquad +2 e^t (1-x) x (1-y) y +2 e^t (1-x) x (1-z) z \\
	&\qquad +2 e^t (1-y) y (1-z)z-6 e^t\\
	& \qquad +e^t \left(\left(x+\frac{1}{3}\right)^2+\left(y+\frac{1}{4}\right)^2+\left(z+\frac{1}{2
	}\right)^2\right),
	\end{split}
\end{align}
\label{eq:3D-heat-problem}
\end{subequations}
with Dirichlet boundary conditions according to the exact solution: 
\begin{align}
	u(x,y,z,t) &=e^t (1-x) x (1-y) y (1-z) z \\
	& \qquad +  e^t \mleft(\mleft(x+\frac{1}{3}\mright)^2+\mleft(y+\frac{1}{4}\mright)^2+\mleft(z+\frac{1}{2}\mright)^2\mright).
\end{align}	
The spatial discretization uses second order finite differences on the unit cube domain $D:=\{ x,y,z \in [0 ,1] \}$ with a uniform mesh with $N_p$ points in each direction. We use the parameter $N_p$ in our experiments to change the stiffness of directional derivatives.   Note that using a uniform mesh allows us to factorize a tridiagonal 1D Jacobian matrix once and use it to efficiently to compute directional stages.

To verify the temporal order of convergence for the new methods, we integrate the problem over a time-span $t = [0,1]$ and record the relative $\ell_2$ error at final time versus number of time steps. \Cref{fig:3D-conv-DIMSIM2,fig:3D-conv-DIMSIM3,fig:3D-conv-DIMSIM4}, verify the theoretical order for a range of mesh sizes. We compare \method{}s with an ADI scheme based on a fourth order IMEX Runge--Kutta method reported in \cite[Example 3]{Sandu_2015_GARK}. We note the deterioration in the order as the problem becomes more stiff with decreasing mesh size in \cref{fig:3D-conv-IMEX4}.
\begin{figure}[tbhp]
	\centering
	\ref{mylegend}
	\begin{subfigure}[t]{0.45\textwidth}
			\begin{tikzpicture}
					\begin{loglogaxis}[height=1.8in, grid=major, xlabel={Steps}, ylabel={Error},
					 xmin=80, xmax=1200,
					legend entries={$N_p=4\quad$,  $N_p=16\quad$,  $N_p=256\quad$,
						$N_p=32\quad$, $N_p=64\quad$, $N_p=128\quad$ }, legend style={at={(0.5, 1.03)},anchor=south}, legend columns=4, legend to name=mylegend]
					\addplot table [x index=0, y index=1] {Data_v1/3D/ADI_DIMSIM2_3D_S_result.txt}
					coordinate [pos=0.25] (A)
					coordinate [pos=0.50] (B);
					\draw (A) |- (B)
					node [anchor = north, pos=0.70] {$m=2$};
					\addplot table [x index=0, y index=2] {Data_v1/3D/ADI_DIMSIM2_3D_S_result.txt};
					\addplot table [x index=0, y index=3] {Data_v1/3D/ADI_DIMSIM2_3D_S_result.txt};
			\end{loglogaxis}
			\end{tikzpicture}
			\caption{\method{2}}
			\label{fig:3D-conv-DIMSIM2}
	\end{subfigure}
	\begin{subfigure}[t]{0.45\textwidth}
		\begin{tikzpicture}
				\begin{loglogaxis}[height=1.8in, grid=major, xlabel={Steps}, ylabel={Error},
				xmin=80, xmax=1200,ymin=10E-11]
				\addplot table [x index=0, y index=1] {Data_v1/3D/ADI_DIMSIM3_3D_S_result.txt}
				coordinate [pos=0.25] (A)
				coordinate [pos=0.50] (B);
				\draw (A) |- (B)
				node [anchor = north, pos=0.7] {$m=3$};
				\addplot table [x index=0, y index=2] {Data_v1/3D/ADI_DIMSIM3_3D_S_result.txt};
				\addplot table [x index=0, y index=3] {Data_v1/3D/ADI_DIMSIM3_3D_S_result.txt};
				\end{loglogaxis}
				\end{tikzpicture}
		\caption{\method{3}}
		\label{fig:3D-conv-DIMSIM3}
	\end{subfigure} 
	\\
	\begin{subfigure}[t]{0.45\textwidth}
		\begin{tikzpicture}
					\begin{loglogaxis}[height=1.8in, grid=major, xlabel={Steps}, ylabel={Error},
					xmin=80, xmax=1200, ymax=1]
					\addplot table [x index=0, y index=1] {Data_v1/3D/ADI_DIMSIM4_3D_S_result.txt}
					coordinate [pos=0.25] (A)
					coordinate [pos=0.50] (B);
					\draw (A) |- (B)
					node [anchor = north, pos=0.75] {$m=4$};
					\addplot table [x index=0, y index=2] {Data_v1/3D/ADI_DIMSIM4_3D_S_result.txt};
					\addplot table [x index=0, y index=3] {Data_v1/3D/ADI_DIMSIM4_3D_S_result.txt};
					\end{loglogaxis}
		\end{tikzpicture}
		\caption{\method{4}}
		\label{fig:3D-conv-DIMSIM4}
	\end{subfigure} 
	\begin{subfigure}[t]{0.45\textwidth}
		\begin{tikzpicture}
				\begin{loglogaxis}[height=1.8in, grid=major, xlabel={Steps}, ylabel={Error},
				xmin=80, xmax=1200]
				\addplot table [x index=0, y index=1] {Data_v1/3D/ADI_RK4_3D_result.txt}
				coordinate [pos=0.25] (A)
				coordinate [pos=0.50] (B);
				\draw (A) |- (B)
				node [anchor = north, pos=0.75] {$m=4$};
				\addplot table [x index=0, y index=2] {Data_v1/3D/ADI_RK4_3D_result.txt}
				coordinate [pos=0.50] (E)
				coordinate [pos=0.70] (F);
				\draw (E) |- (F)
				node [anchor = north, pos=0.75] {$m=3$};
				\addplot table [x index=0, y index=3] {Data_v1/3D/ADI_RK4_3D_result.txt}
				coordinate [pos=0.65] (C)
				coordinate [pos=1.00] (D);
				\draw (C) |- (D)
				node [anchor = north, pos=0.8] {$m=2$};
				\end{loglogaxis}
		\end{tikzpicture}
		\caption{IMEX-RK4}
		\label{fig:3D-conv-IMEX4}
	\end{subfigure} 
	\caption{Convergence plots for \method{}s on 3D test problem compared to IMEX-RK4 method}
	\label{fig:ADI_3D_conv}
\end{figure}

For a 2D numerical experiment the following problem is used on unit square domain $D= \{ x,y \in [0,1] \}$, with the same spatial discretization and integrated over the same time-span:
 \begin{subequations}
 	\begin{align}
 	\frac{\partial u}{\partial t} &= \frac{\partial^2 u}{\partial x^2} + \frac{\partial^2 u}{\partial y^2} + h(x,y,t),\\
 	\begin{split}
 	h(x,y,t) &= e^t (1-x) x (1-y) y+e^t
 	\left(\left(x+\frac{1}{3}\right)^2+\left(y+\frac{1}{4}\right)^2-4\right) \\
 	&\qquad + 2 e^t  (1-x) x+2 e^t (1-y) y,
 	\end{split}
 	\end{align}
 	\label{eq:2D-heat-problem}
 \end{subequations}
with Dirichlet boundary conditions according to the exact solution:
\begin{align}
	u(x,y,t) = & e^t (1-x) x (1-y) y+e^t
	\left(\left(x+\frac{1}{3}\right)^2+\left(y+\frac{1}{4}\right)^2\right).
\end{align}
\begin{figure}[tbhp]
		\centering
		\ref{mylegend2}
		\begin{subfigure}[t]{0.45\textwidth}
			\begin{tikzpicture}
				\begin{loglogaxis}[height=1.8in, grid=major, xlabel={Steps}, ylabel={Error}, , xmin=300, xmax=5500,
				legend entries={$N_p=64\quad$,  $N_p=256\quad$,  $N_p=1024\quad$},
				legend style={at={(0.5, 1.03)},anchor=south}, legend columns=4, legend to name=mylegend2]
					\addplot table [x index=0, y index=1] {Data_v1/2D/ADI_DIMSIM2_2D_S_result.txt}
					 coordinate [pos=0.40] (A)
					 coordinate [pos=0.60] (B);
					\draw (A) |- (B)
					node [anchor = north, pos=0.70] {$m=2$};
					\addplot table [x index=0, y index=2] {Data_v1/2D/ADI_DIMSIM2_2D_S_result.txt};
					\addplot table [x index=0, y index=3] {Data_v1/2D/ADI_DIMSIM2_2D_S_result.txt};
				\end{loglogaxis}
			\end{tikzpicture}
		\caption{\method{2}}
		\label{fig:2D_conv-DIMSIM2}
		\end{subfigure}
		\begin{subfigure}[t]{0.45\textwidth}
			\begin{tikzpicture}
				\begin{loglogaxis}[height=1.8in, grid=major, xlabel={Steps}, ylabel={Error}, xmin=300, xmax=5500]
					\addplot table [x index=0, y index=1] {Data_v1/2D/ADI_DIMSIM3_2D_S_result.txt}
				 coordinate [pos=0.20] (A)
					coordinate [pos=0.40] (B);
					\draw (A) |- (B)
					node [anchor = north, pos=0.7] {$m=3$};
					\addplot table [x index=0, y index=2] {Data_v1/2D/ADI_DIMSIM3_2D_S_result.txt};
					\addplot table [x index=0, y index=3] {Data_v1/2D/ADI_DIMSIM3_2D_S_result.txt};
				\end{loglogaxis}
			\end{tikzpicture}
		\caption{\method{3}}
		\label{fig:2D_conv-DIMSIM3}
		\end{subfigure} 
		\\
		\begin{subfigure}[t]{0.45\textwidth}
			\begin{tikzpicture}
				\begin{loglogaxis}[height=1.8in, grid=major, xlabel={Steps}, ylabel={Error}, xmin=300, xmax=1500]
					\addplot table [x index=0, y index=1] {Data_v1/2D/ADI_DIMSIM4_2D_S_result.txt}
					 coordinate [pos=0.2] (A)
					 coordinate [pos=0.40] (B);
					\draw (A) |- (B)
					node [anchor = north, pos=0.75] {$m=4$};
					\addplot table [x index=0, y index=2] {Data_v1/2D/ADI_DIMSIM4_2D_S_result.txt};
					\addplot table [x index=0, y index=3] {Data_v1/2D/ADI_DIMSIM4_2D_S_result.txt};
				\end{loglogaxis}
			\end{tikzpicture}
		\caption{\method{4}}
		\label{fig:2D_conv-DIMSIM4}
		\end{subfigure} 
		\begin{subfigure}[t]{0.45\textwidth}
			\begin{tikzpicture}
				\begin{loglogaxis}[height=1.8in, grid=major, xlabel={Steps}, ylabel={Error}, xmin=300, xmax=1500,ymax=1e-6,ymin=1e-9]
					\addplot table [x index=0, y index=1] {Data_v1/2D/ADI_RK4_2D_S_result.txt};
					\addplot table [x index=0, y index=2] {Data_v1/2D/ADI_RK4_2D_S_result.txt};
					\addplot table [x index=0, y index=3] {Data_v1/2D/ADI_RK4_2D_S_result.txt}
				 	coordinate [pos=0.6] (A)
					coordinate [pos=0.8] (B);
					\draw (A) -| (B)
					node [anchor = south, pos=0.35] {$m=2$};
				\end{loglogaxis}
			\end{tikzpicture}
		\caption{IMEX-RK4}
		\label{fig:2D_conv-IMEX4}
		\end{subfigure} 
		\caption{Convergence plots for \method{}s on 2D test problem compared to IMEX-RK4 method}
		\label{fig:ADI_2D_conv}
\end{figure}
\Cref{fig:ADI_2D_conv} shows convergence plots for this experiment. Once again, we observe the order reduction for the IMEX-RK4 method in \cref{fig:2D_conv-IMEX4} while \method{}s retain their convergence order in \cref{fig:2D_conv-DIMSIM2,fig:2D_conv-DIMSIM3,fig:2D_conv-DIMSIM4}.

For a third set of experiments, we examine solutions of \cref{eq:2D-heat-problem}, this time considering the forcing term $g(x,y,t)$ as a third partition to be treated explicitly in the entire integration. This means that the Butcher tableau in \cref{eq:ADI-DIMSIM-2D-3Part} is used for these experiments. \Cref{fig:ADI_3Part_2D_conv} summarizes the results with close to theoretical order of \method{}s.
\begin{figure}[tbhp]
		\centering
		\ref{mylegend3}
		\begin{subfigure}[t]{0.45\textwidth}
			\begin{tikzpicture}
				\begin{loglogaxis}[height=1.8in, grid=major, xlabel={Steps}, ylabel={Error}, xmin=300, xmax=5500,
				legend entries={$N_p=64\quad$,  $N_p=256\quad$,  $N_p=1024\quad$}, legend style={at={(0.5, 1.03)},anchor=south}, legend columns=4, legend to name=mylegend3]
					\addplot table [x index=0, y index=1] {Data_v1/2D_3Partitions/ADI_DIMSIM2_2D_3Part.txt}
					 coordinate [pos=0.2] (A)
					 coordinate [pos=0.4] (B);
					\draw (A) |- (B)
					node [anchor = north, pos=0.75] {$m=2$};
					\addplot table [x index=0, y index=2] {Data_v1/2D_3Partitions/ADI_DIMSIM2_2D_3Part.txt};
					\addplot table [x index=0, y index=3] {Data_v1/2D_3Partitions/ADI_DIMSIM2_2D_3Part.txt};
				\end{loglogaxis}
			\end{tikzpicture}
		\caption{\method{2}}
		\end{subfigure}
		\begin{subfigure}[t]{0.45\textwidth}
			\begin{tikzpicture}
				\begin{loglogaxis}[height=1.8in, grid=major, xlabel={Steps}, ylabel={Error},  xmin=300, xmax=5500]
					\addplot table [x index=0, y index=1] {Data_v1/2D_3Partitions/ADI_DIMSIM3_2D_3Part.txt}
				 coordinate [pos=0.2] (A)
					coordinate [pos=0.4] (B);
					\draw (A) |- (B)
					node [anchor = north, pos=0.7] {$m=3$};
					\addplot table [x index=0, y index=2] {Data_v1/2D_3Partitions/ADI_DIMSIM3_2D_3Part.txt};
					\addplot table [x index=0, y index=3] {Data_v1/2D_3Partitions/ADI_DIMSIM3_2D_3Part.txt};
				\end{loglogaxis}
			\end{tikzpicture}
		\caption{\method{3}}
		\end{subfigure} 
		\\
		 \hspace{.5cm} 
		\begin{subfigure}[t]{0.45\textwidth}
			\begin{tikzpicture}
				\begin{loglogaxis}[height=1.8in, grid=major, xlabel={Steps}, ylabel={Error}, xmin=300, xmax=1500] 
					\addplot table [x index=0, y index=1] {Data_v1/2D_3Partitions/ADI_DIMSIM4_2D_3Part.txt}
					 coordinate [pos=0.2] (A)
					 coordinate [pos=0.4] (B);
					\draw (A) |- (B)
					node [anchor = north, pos=0.7] {$m=4$};
					\addplot table [x index=0, y index=2] {Data_v1/2D_3Partitions/ADI_DIMSIM4_2D_3Part.txt};
					\addplot table [x index=0, y index=3] {Data_v1/2D_3Partitions/ADI_DIMSIM4_2D_3Part.txt};
				\end{loglogaxis}
			\end{tikzpicture}
		\caption{\method{4}}
		\end{subfigure} \hfill
		\begin{subfigure}[t]{0.45\textwidth}
		\end{subfigure} 
		\caption{Convergence plots for \method{}s on 2D test problem with an explicit partition}
		\label{fig:ADI_3Part_2D_conv}
\end{figure}
\section{Conclusions}
\label{sec:conclusions}

This work constructs the new family of ADI-GLM schemes that perform alternating directions implicit integration in the framework of General Linear Methods. Each stage of a ADI-GLM scheme is implicit in a single component of the method, and is explicitly coupled to the other components. This ensures a high computational efficiency. The ADI character of the method stems from the fact that consecutive stages are implicit in different partitions, thereby ``alternating directions.'' Order conditions and stability of these methods are investigated theoretically. The  ADI-GLM structure allows for high stage order approximations, and this property alleviates the order reduction observed with other families of schemes.

Using the new ADI-GLM theory we construct practical \method{}s of orders two, three, and four. Their design emphasizes  stability when applied to parabolic systems where each component has a Jacobian with real negative eigenvalues. The stability analysis and plots show the new schemes are well-suited for these problems.  Numerical experiments show that the new methods retain their high order of accuracy when applied to parabolic equations with time-dependent Dirichlet boundary conditions where other ADI methods suffer from order reduction.

The future directions for the authors include extending the current set of methodology to design methods suited for hyperbolic and oscillatory systems and numerical experiments highlighting the computational efficiency of \method{}s on large scale problems.

\section*{Acknowledgments}
This work was funded by awards NSF CCF--1613905, NSF ACI--1709727, AFOSR DDDAS FA9550-17-1-0015, and by the Computational Science Laboratory at Virginia Tech. The authors would like to thank Prof. Domingo Hern{\'a}ndez Abreu for his valuable comments on this manuscript. 
\section*{References}
\bibliographystyle{elsarticle-num}
\bibliography{Bib/sandu,Bib/glm,Bib/ode_general,Bib/ADI,Bib/pde_time_implicit,Bib/misc,Bib/ode_glm,Bib/ode_splitting}
\appendix
\section{ADI-GLMs}
\label{app:methods}
This section includes the newly developed \method{}s of orders two, three, and four. MATLAB files containing these coefficients are also available in [dataset] \cite{GLM-ADI-Coeffs}.
\begingroup
\renewcommand*{\arraystretch}{1.4}

\subsection{\method{2}}
\label{app:method2}

We use an L-stable implicit base method from \cite{Butcher_1993_DIMSIM} for \method{2}.

\begin{alignat*}{2}
	\A\comp{I} &= \begin{bmatrix}
		\frac{2 - \sqrt{2}}{2} & 0 \\
		\frac{2 (\sqrt{2}+3)}{7} & \frac{2 - \sqrt{2}}{2} \\
	\end{bmatrix}, \quad
	& \B\comp{I} &= \begin{bmatrix}
		\frac{73-34 \sqrt{2}}{28} & \frac{4 \sqrt{2} - 5}{4} \\
		\frac{3(29 - 16 \sqrt{2})}{28} & \frac{34 \sqrt{2}-45}{28} \\
	\end{bmatrix}, \\
	\W\comp{I} &= \begin{bmatrix}
		1 & \frac{\sqrt{2} - 2}{2} & 0 \\
		1 & \frac{3 (\sqrt{2}-4)}{14} & \frac{\sqrt{2} - 1}{2} \\
	\end{bmatrix}, \quad
	& \A\comp{E} &= \begin{bmatrix}
		0 & 0 \\
		\frac{3}{2} & 0 \\
	\end{bmatrix}, \\
	\B\comp{E} &= \begin{bmatrix}
		\frac{1}{\sqrt{2}} & \frac{3-\sqrt{2}}{4} \\
		\frac{\sqrt{2}-1}{2} & \frac{3-\sqrt{2}}{4} \\
	\end{bmatrix}, \quad
	& \W\comp{E} &= \begin{bmatrix}
		1 & 0 & 0 \\
		1 & -\frac{1}{2} & \frac{1}{2} \\
	\end{bmatrix}, \\
	v &= \begin{bmatrix}
		\frac{3 - \sqrt{2}}{2} & \frac{\sqrt{2} - 1}{2}
	\end{bmatrix}^T, \quad
	& \c &= \begin{bmatrix}
		0 & 1
	\end{bmatrix}^T.
\end{alignat*}

\subsection{\method{3}}

We use an L-stable implicit base method from \cite{Butcher_1996_construction} for \method{3}.  The following coefficients are rational approximations to the exact coefficients accurate to 24 digits.

\begin{align*}
	\A\comp{I} &= \begin{bmatrix}
		\frac{129981159316}{298213221025} & 0 & 0 \\
		\frac{472981046840}{1888035733227} & \frac{129981159316}{298213221025} & 0 \\
		-\frac{408860438935}{337456558734} & \frac{1049716501919}{1048380236594} & \frac{129981159316}{298213221025} \\
	\end{bmatrix}, \\
	\B\comp{I} &= \begin{bmatrix}
		\frac{818629988268}{981817092145} & \frac{735879558291}{1139134361459} & -\frac{96693387431}{306159262034} \\
		\frac{435713380671}{718693545019} & \frac{3397277300866}{2639826970205} & -\frac{581689679739}{1212506039656} \\
		-\frac{164008995335}{531777165056} & \frac{3204278525979}{842472621931} & -\frac{1170634530631}{1044535547981} \\
	\end{bmatrix}, \\
	\W\comp{I} &= \begin{bmatrix}
		1 & -\frac{129981159316}{298213221025} & 0 & 0 \\
		1 & -\frac{63231801579}{339260252164} & -\frac{94226735668}{1013918320559} & -\frac{50172116077}{1490999795865} \\
		1 & \frac{1224205243956}{1580735023225} & -\frac{377260820095}{864278390147} & -\frac{145496067686}{824686465859} \\
	\end{bmatrix}, \\
	\A\comp{E} &= \begin{bmatrix}
		0 & 0 & 0 \\
		\frac{692830401049}{1119419041371} & 0 & 0 \\
		-\frac{974910195245}{1036334372568} & \frac{1458124485343}{1218848111125} & 0 \\
	\end{bmatrix}, \\
	\B\comp{E} &= \begin{bmatrix}
		\frac{274198327012}{348784765929} & \frac{335124252337}{1242427076379} & \frac{256046237035}{1044616400532} \\
		\frac{2367946890051}{2381074405894} & -\frac{395462379375}{996294720374} & \frac{391448928279}{669688356392} \\
		\frac{1211513153203}{1601457627995} & \frac{473388990672}{901108379101} & \frac{1335987676745}{1749669440649} \\
	\end{bmatrix}, \\
	\W\comp{E} &= \begin{bmatrix}
		1 & 0 & 0 & 0 \\
		1 & -\frac{105007291910}{883010702197} & \frac{1}{8} & \frac{1}{48} \\
		1 & \frac{6500435948486}{8732264247243} & -\frac{119638187109}{1218848111125} & \frac{25266119777}{1475180609484} \\
	\end{bmatrix}, \\
	v &= \begin{bmatrix}
		\frac{1611220452657}{2918396719813} & \frac{626900045900}{853091602939} & -\frac{165394139815}{576391394057}
	\end{bmatrix}^T, \\
	\c &= \begin{bmatrix}
		0 & \frac{1}{2} & 1
	\end{bmatrix}^T.
\end{align*}

\subsection{\method{4}}

\begin{sidewaystable}
\begin{align*}
	\A\comp{I} &= \begin{bmatrix}
		\frac{2}{5} & 0 & 0 & 0 \\
		\frac{1}{155} & \frac{2}{5} & 0 & 0 \\
		-\frac{3}{127} & \frac{31}{72} & \frac{2}{5} & 0 \\
		\frac{6}{139} & \frac{12}{19} & \frac{29}{95} & \frac{2}{5} \\
	\end{bmatrix}, \quad
	\B\comp{I} = \begin{bmatrix}
		\frac{25640275033859}{233564187988800} & \frac{405169687}{540615360} & \frac{1089772729}{8109230400} & \frac{70445177}{426801600} \\
		\frac{89870426730779}{233564187988800} & -\frac{545995987}{1621846080} & \frac{13906861889}{8109230400} & -\frac{1223893451}{4410283200} \\
		\frac{292292722987739}{233564187988800} & -\frac{5722388059}{1621846080} & \frac{5251926081}{901025600} & -\frac{115646334041}{54203803200} \\
		\frac{12591629268162881}{4437719571787200} & -\frac{4936252337}{540615360} & \frac{102615203329}{8109230400} & -\frac{5841129112303}{1127183025600} \\
	\end{bmatrix}, \\
	\W\comp{I} &= \begin{bmatrix}
		1 & -\frac{2}{5} & 0 & 0 & 0 \\
		1 & -\frac{34}{465} & -\frac{7}{90} & -\frac{13}{810} & -\frac{19}{9720} \\
		1 & -\frac{6413}{45720} & -\frac{203}{1080} & -\frac{137}{2160} & -\frac{827}{58320} \\
		1 & -\frac{5018}{13205} & -\frac{179}{570} & -\frac{233}{1710} & -\frac{2707}{61560} \\
	\end{bmatrix}, \quad
	\A\comp{E} = \begin{bmatrix}
		0 & 0 & 0 & 0 \\
		\frac{768}{7129} & 0 & 0 & 0 \\
		\frac{2699}{8714} & \frac{4969}{11444} & 0 & 0 \\
		\frac{2629}{3049} & \frac{2643}{20780} & \frac{11707}{22938} & 0 \\
	\end{bmatrix}, \\
	\B\comp{E} &= \begin{bmatrix}
		\frac{9887514441977875393}{8084061960608111040} & \frac{75125403707867}{1268701473326400} & \frac{200041286909}{326332503360} & -\frac{5924747}{85360320} \\
		\frac{8877006696901861513}{8084061960608111040} & \frac{727096994167267}{1268701473326400} & -\frac{67370070011}{326332503360} & \frac{119019300359}{202844573760} \\
		\frac{17771936994130966829533}{23128501269299805685440} & \frac{249067742877763}{140966830369600} & -\frac{519937182674317}{311212430704320} & \frac{940676971064501}{1064048569640640} \\
		\frac{8566493244911672759404729}{32110678261545596037650880} & \frac{17071987325364576461}{4850245732526827200} & -\frac{257098496412689}{67811894198208} & \frac{275159340062707361}{206758465410336192} \\
	\end{bmatrix}, \\
	\W\comp{E} &= \begin{bmatrix}
		1 & 0 & 0 & 0 & 0 \\
		1 & \frac{4825}{21387} & \frac{1}{18} & \frac{1}{162} & \frac{1}{1944} \\
		1 & -\frac{11557817}{149584524} & \frac{7981}{102996} & \frac{46831}{1853928} & \frac{30869}{5561784} \\
		1 & -\frac{363193513153}{726655425180} & \frac{83904703}{714977460} & \frac{198121973}{4289864760} & \frac{302650439}{19304391420} \\
	\end{bmatrix}, \\
	v &= \begin{bmatrix}
		\frac{3}{40} & -\frac{77}{277} & -\frac{41}{107} & \frac{1880483}{1185560} \\
	\end{bmatrix}^T, \quad
	\c = \begin{bmatrix}
		0 & \frac{1}{3} & \frac{2}{3} & 1
	\end{bmatrix}^T.
\end{align*}
\end{sidewaystable}

\endgroup
\section{Stability of ADI-GLMs}
\label{app:stability-plots}
\begin{figure}[ht!]
	\begin{subfigure}{0.48 \textwidth}
		\includegraphics[width=\textwidth]{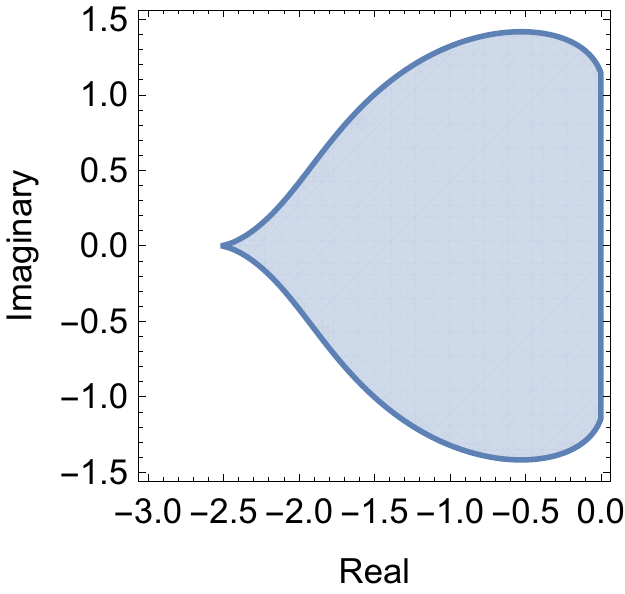}
		\caption{$\mathcal{S}\comp{E}$ stability region}
	\end{subfigure} \hfill
	\begin{subfigure}{0.48 \textwidth}
		\includegraphics[width=\textwidth]{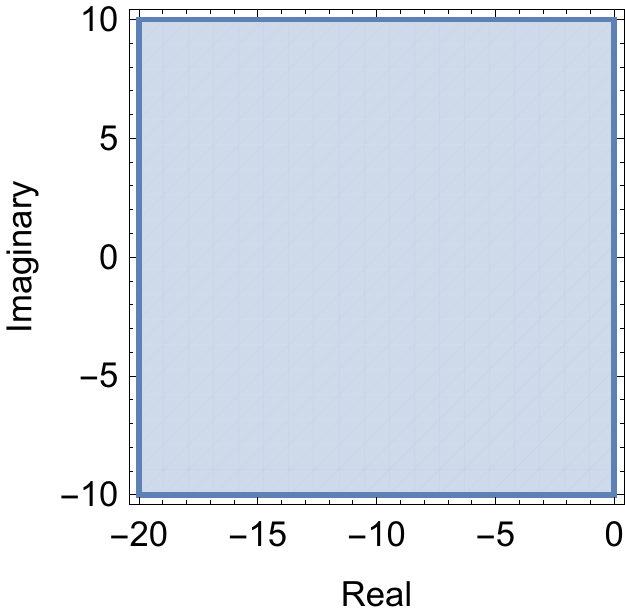}
		\caption{$\mathcal{S}\comp{I}$ stability region}
	\end{subfigure}

	\begin{subfigure}{0.48 \textwidth}
		\includegraphics[width=\textwidth]{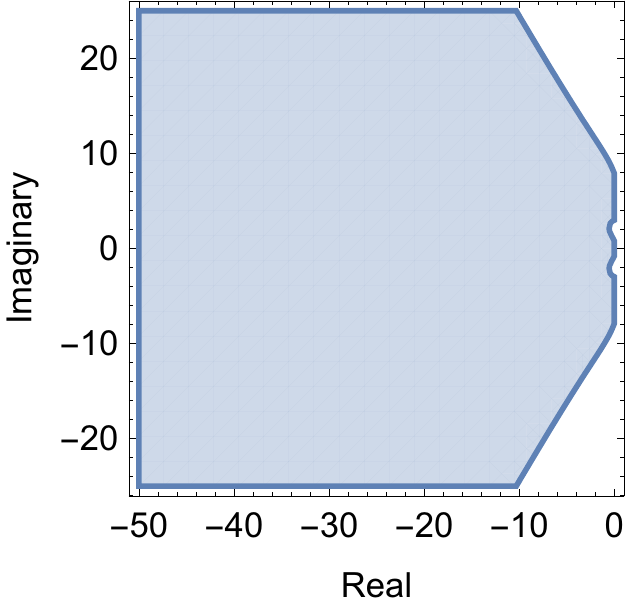}
		\caption{$\mathcal{S}_{\text{Cplx}}$ stability region with $\alpha = \ang{60}$}
	\end{subfigure} \hfill
	\begin{subfigure}{0.48 \textwidth}
		\includegraphics[width=\textwidth]{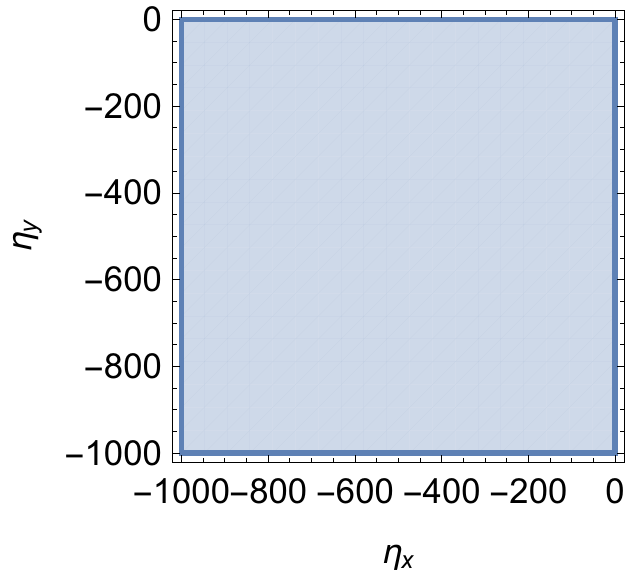}
		\caption{$\mathcal{S}_{\text{Real}}$ stability region}
	\end{subfigure}
\caption{Stabilty plots for \method{2}}
\label{fig:stab-plot-DIMSIM2}
\end{figure}
\begin{figure}[ht!]
	\begin{subfigure}{0.48 \textwidth}
		 \includegraphics[width=\textwidth]{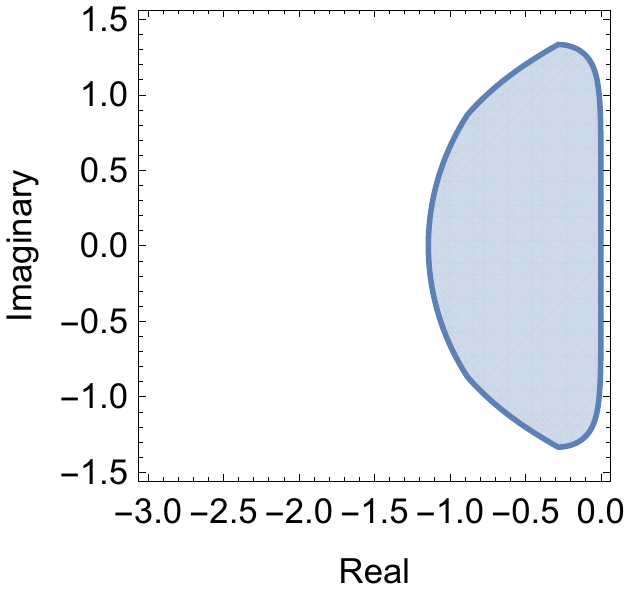}
		\caption{$\mathcal{S}\comp{E}$ stability region}
	\end{subfigure} \hfill
	\begin{subfigure}{0.48 \textwidth}
		 \includegraphics[width=\textwidth]{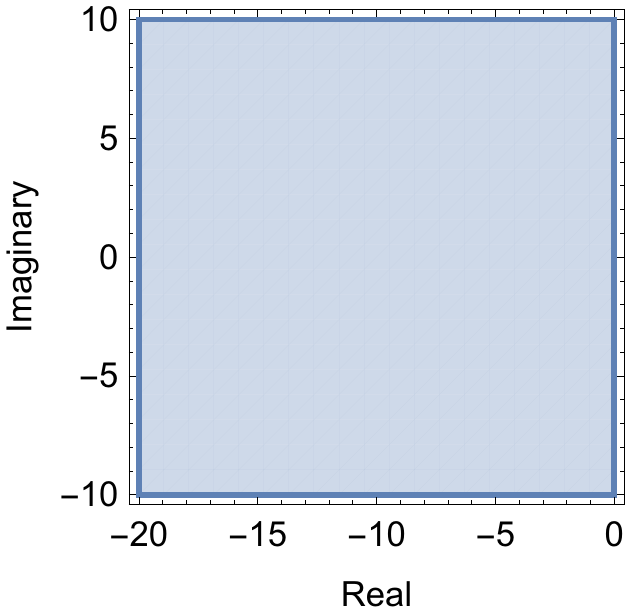}
		\caption{$\mathcal{S}\comp{I}$ stability region}
	\end{subfigure}
	
	\begin{subfigure}{0.48 \textwidth}
		 \includegraphics[width=\textwidth]{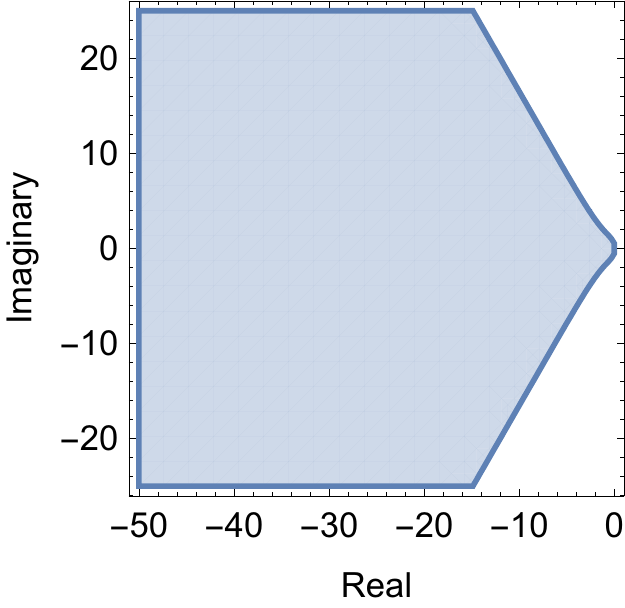}
		\caption{$	\mathcal{S}_{\text{Cplx}}$ stability region with $\alpha = \ang{55}$}
	\end{subfigure} \hfill
	\begin{subfigure}{0.48 \textwidth}
		 \includegraphics[width=\textwidth]{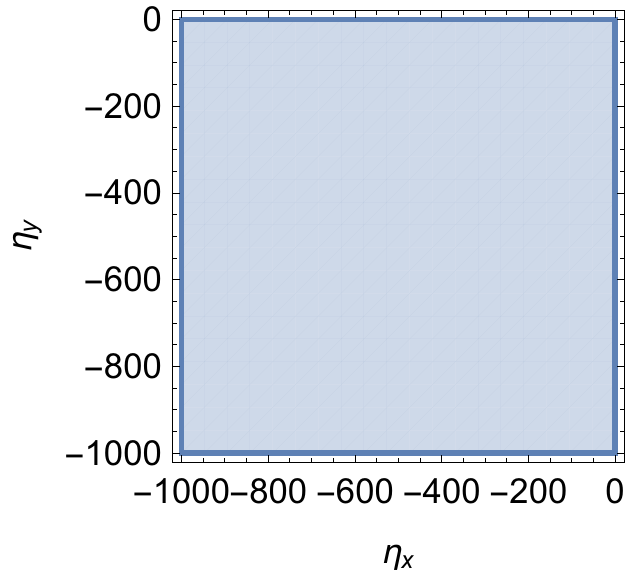}
		\caption{$\mathcal{S}_{\text{Real}}$ stability region}
	\end{subfigure}
	\caption{Stabilty plots for \method{3}}
	\label{fig:stab-plot-DIMSIM3}
\end{figure}
\begin{figure}[ht!]
	\begin{subfigure}{0.48 \textwidth}
		 \includegraphics[width=\textwidth]{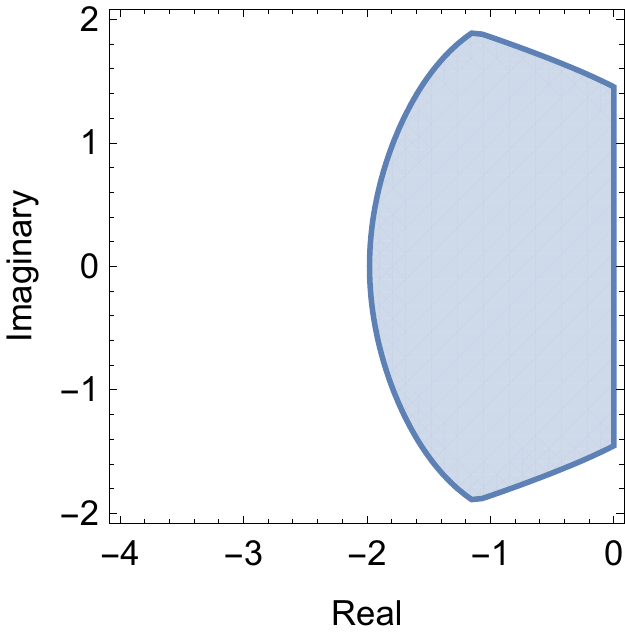}
		\caption{$\mathcal{S}\comp{E}$ stability region}
	\end{subfigure} \hfill
	\begin{subfigure}{0.48 \textwidth}
		 \includegraphics[width=\textwidth]{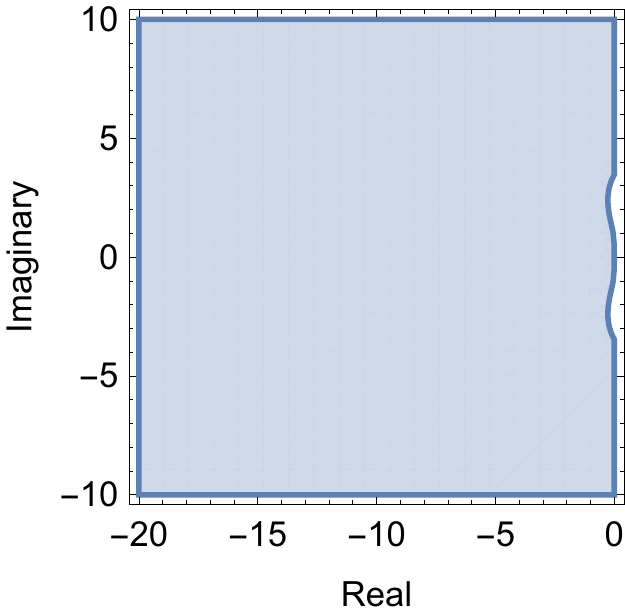}
		\caption{$\mathcal{S}\comp{I}$ stability region with $\alpha = \ang{83}$}
	\end{subfigure}
	
	\begin{subfigure}{0.48 \textwidth}
		 \includegraphics[width=\textwidth]{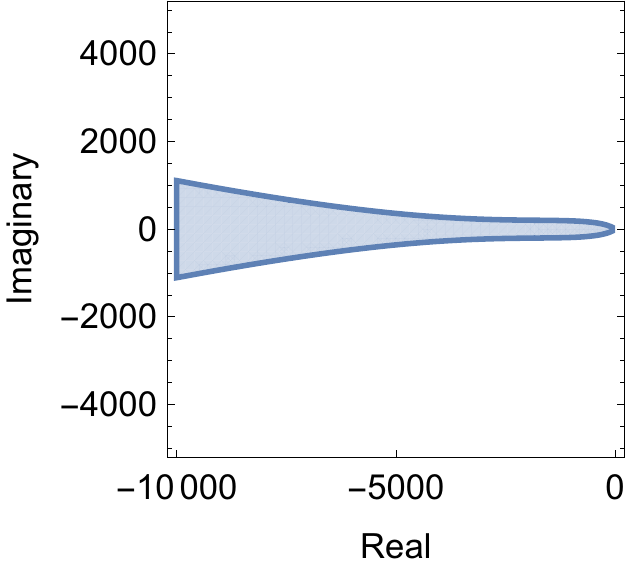}
		\caption{$\mathcal{S}_{\text{Cplx}}$ with $\alpha = \ang{3.7}$}
	\end{subfigure} \hfill
	\begin{subfigure}{0.48 \textwidth}
		 \includegraphics[width=\textwidth]{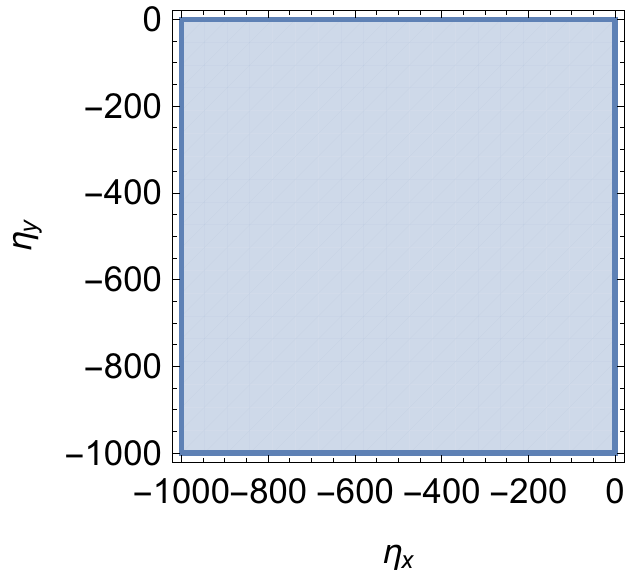}
		\caption{$	\mathcal{S}_{\text{Real}}$}
	\end{subfigure}
	\caption{Stability plots for \method{4}}
	\label{fig:stab-plot-DIMSIM4}
\end{figure}
\end{document}


\maketitle

\section{A detailed example}

Here we include some equations and theorem-like environments to show
how these are labeled in a supplement and can be referenced from the
main text.
Consider the following equation:
\begin{equation}
  \label{eq:suppa}
  a^2 + b^2 = c^2.
\end{equation}
You can also reference equations such as \cref{eq:matrices,eq:bb} 
from the main article in this supplement.

\lipsum[100-101]

\begin{theorem}
  An example theorem.
\end{theorem}

\lipsum[102]
 
\begin{lemma}
  An example lemma.
\end{lemma}

\lipsum[103-105]

Here is an example citation: \cite{KoMa14}.

\section[Proof of Thm]{Proof of \cref{thm:bigthm}}
\label{sec:proof}

\lipsum[106-112]

\section{Additional experimental results}
\Cref{tab:foo} shows additional
supporting evidence. 

\begin{table}[htbp]
{\footnotesize
  \caption{Example table}  \label{tab:foo}
\begin{center}
  \begin{tabular}{|c|c|c|} \hline
   Species & \bf Mean & \bf Std.~Dev. \\ \hline
    1 & 3.4 & 1.2 \\
    2 & 5.4 & 0.6 \\ \hline
  \end{tabular}
\end{center}
}
\end{table}

\bibliographystyle{siamplain}
\bibliography{references}